\newcommand{\BZ}{{\mathbb{Z}}}
\newcommand{\BN}{{\mathbb{N}}}
\newcommand{\BQ}{{\mathbb{Q}}}
\newcommand{\BG}{{\mathbb{G}}}
\newcommand{\gd}{\delta}
\newcommand{\gC}{\Gamma}
\newcommand{\gs}{\sigma}
\newcommand{\gO}{\Omega}
\newcommand{\gep}{\epsilon}
\newcommand{\Ad}{\text{Ad}}
\newcommand{\GL}{\text{GL}}
\newcommand{\Lie}{\text{Lie}}
\newcommand{\Gr}{\text{Gr}}
\def\sub{\text{Sub}}
\def\irs{\text{IRS}}
\newcommand{\id}[1]{Id_{#1}}
\newtheorem{prop}{Proposition}[section]
\newtheorem{thm}[prop]{Theorem}
\newtheorem{lem}[prop]{Lemma}
\newtheorem{cor}[prop]{Corollary}
\theoremstyle{definition}
\newtheorem{defn}[prop]{Definition}
\newtheorem{ques}[prop]{Question}
\newtheorem{clm}[prop]{Claim}
\newenvironment{dedication}
        {\vspace{6ex}\begin{quotation}\begin{center}\begin{em}}
        {\par\end{em}\end{center}\end{quotation}}
\long\def\@savemarbox#1#2{\global\setbox#1\vtop{\hsize\marginparwidth 
%%%%%  \@parboxrestore #2}}
  \@parboxrestore\tiny\raggedright #2}}
\begin{document}
\author{Tsachik Gelander}

\address{Mathematics and Computer Science\\
Weizmann Institute\\
Rechovot 76100, Israel\\}
\email{tsachik.gelander@gmail.com}
\thanks{Supported by ISF-Moked grant 2095/15}

\date{\today}

\title{Kazhdan--Margulis theorem for Invarant Random Subgroups}

\maketitle

\begin{dedication}
%\hspace{4cm}
\vspace*{.5cm}{Dedicated to David Kazhdan for his 70'th birthday\\ and to Grisha Margulis for his 70'th birthday.}
\end{dedication}

\begin{abstract}
%We prove a strong version of the Kazhdan--Margulis theorem. 
Given a simple Lie group $G$, we show that the lattices in $G$ are weakly uniformly discrete. This is a strengthening of the Kazhdan--Margulis theorem. Our proof however is straightforward --- considering general IRS rather than lattices allows us to apply a compactness argument. In terms of p.m.p.\ actions, we show that for every $\gep>0$ there is an identity neighbourhood $U_\gep\subset G$ which intersects trivially 
the stabilizers of $1-\gep$ of the points in every non-atomic probability $G$-space.
% are jointly uniformly discrete.
\end{abstract}

%\section{introduction}
%
%Let $G$ be a connected semisimple Lie group with no compact factors.
%A family $\mathcal{F}$ of subgroups of $G$ is called {\it uniformly discrete} if there is an identity neighbourhood $U\subset G$ which intersects trivially every $\gC\in\mathcal{F}$. 
%Recall the following conjecture of Margulis:
%
%\begin{conj}[\cite{Margulis}, page 322]\label{conj:Margulis}
%The family of all torsion-free anisotropic arithmetic lattices in $G$ is uniformly discrete. 
%\end{conj} 
%
%Let $X=G/K$ be the associated symmetric spaces. The geometric interpretation of Conjecture \ref{conj:Margulis} is that compact arithmetic $X$-manifolds are jointly uniformly thick, i.e. there is a fixed $\gep>0$ such that the injectivity radius at any point in any such $M$ is at least $\gep/2$, or in other words, the systol (the shortest closed geodesic) in every such $M$ is at least $\gep$. 
%In view of \cite{hv}. In view of \cite{hv}, Section 5, the analog statement holds for isotropic arithmetic lattices. Thus conjecture \ref{conj:Margulis} is equivalent to:
%
%\begin{conj}
%There is an identity neighbourhood $\gO\subset G$ whose intersection with every arithmetic subgroup of $G$ consists of unipotent elements.
%\end{conj}

Let $G$ be a locally compact $\gs$-compact group. We denote by $\sub(G)$ the (compact) space of closed subgroups of $G$, equipped with the Chabauty topology. 
An IRS (invariant random subgroup) of $G$ is a conjugation invariant Borel probability measure on $\sub(G)$. We denote by $\irs(G)$ 
the space of IRS's of $G$ equipped with the weak-$*$ topology. By 
Riesz' representation theorem and Alaoglu's theorem, $\irs(G)$ is compact. Every lattice $\gC\le G$ corresponds to an IRS, $\mu_\gC$, defined as the push forward of the $G$-invariant probability measure on $G/\gC$ under the map
$G/\gC\to \sub_G, ~ g\gC\mapsto g\gC g^{-1}$.

An IRS $\mu$ is called discrete if $\mu$-almost every subgroup is discrete. Recall the following variant of the Borel density theorem (see also \cite{7,GL} for similar statements):

\begin{prop}\label{prop:BD}
Let $G$ be a connected non-compact simple Lie group.
Every $\mu\in\irs(G)$ is of the form $\mu=\mu(\{ G\})\gd_G+(1-\mu(\{ G\}))\nu$ where $\nu$ is a discrete IRS.
\end{prop}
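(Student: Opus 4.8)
The plan is to reduce the statement to the Borel density theorem for invariant measures on a Grassmannian, applied to the Lie algebras of the identity components. For a closed subgroup $H\le G$ write $H^0$ for its identity component and $\mathfrak h=\Lie(H^0)\subseteq\mathfrak g:=\Lie(G)$. The assignment
\[
\gF\colon \sub(G)\longrightarrow \coprod_{k=0}^{\dm\mathfrak g}\Gr(k,\mathfrak g),\qquad H\longmapsto \mathfrak h,
\]
is Borel measurable and conjugation-equivariant: $\gF(gHg^{-1})=\Ad(g)\gF(H)$. Hence $\bar\mu:=\gF_*\mu$ is an $\Ad(G)$-invariant probability measure on $\coprod_k\Gr(k,\mathfrak g)$. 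The whole proposition will follow once we show that $\bar\mu$ is concentrated on the two $\Ad(G)$-fixed points $\{0\}$ and $\mathfrak g$, since $\mathfrak h=0$ means $H$ is discrete while $\mathfrak h=\mathfrak g$ forces $H^0=G$, hence $H=G$ as $G$ is connected.

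For the key step I would invoke Furstenberg's lemma stratum by stratum. Fix $k$ and embed $\Gr(k,\mathfrak g)$ into $\BP(\bigwedge^k\mathfrak g)$ by the Plücker map, so that $\Ad(G)$ acts through the linear representation on $\bigwedge^k\mathfrak g$. For a one-parameter unipotent subgroup $u_t=\exp(tN)$ of $\Ad(G)$, the classical lemma of Furstenberg states that every $u_t$-invariant probability measure on a projective space is supported on the fixed-point set of $\{u_t\}$; this is proved by letting $t\to\infty$ and observing that $(u_t)_*$ pushes any measure towards the image subspaces, so invariance forces concentration on the fixed locus. Since $G$ is simple and non-compact, $\Ad(G)$ is generated by finitely many such unipotent one-parameter subgroups, and intersecting the corresponding full-measure fixed sets shows that $\bar\mu$ is supported on the lines fixed by all of $\Ad(G)$. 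Under the Plücker embedding these correspond to the $\Ad(G)$-invariant subspaces of $\mathfrak g$, i.e.\ to the ideals of $\mathfrak g$; as $\mathfrak g$ is simple, the only ideals are $0$ and $\mathfrak g$.

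Combining the two steps, $\mu$-almost every $H$ satisfies $\mathfrak h\in\{0,\mathfrak g\}$, so $\mu$-a.e.\ subgroup is either discrete or equal to $G$. The singleton $\{G\}$ is a conjugation-fixed point of $\sub(G)$, so restricting $\mu$ to its complement and renormalising produces a conjugation-invariant probability measure
\[
\nu:=\tfrac{1}{1-\mu(\{G\})}\,\mu|_{\sub(G)\setminus\{G\}}
\]
(when $\mu(\{G\})=1$ take $\nu$ to be any discrete IRS, e.g.\ $\gd_{\{e\}}$), which by the above is supported on discrete subgroups; this gives the asserted decomposition $\mu=\mu(\{G\})\gd_G+(1-\mu(\{G\}))\nu$. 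The main obstacle is the measurability and equivariance of $\gF$, since in the Chabauty topology the dimension of $H^0$ jumps and $H\mapsto\mathfrak h$ is only Borel rather than continuous; once $\gF$ is set up correctly, the Furstenberg--Borel density input does the real work.
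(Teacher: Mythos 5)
Your proof is correct and follows essentially the same route as the paper: push $\mu$ forward under $H\mapsto\Lie(H)$ to an $\Ad(G)$-invariant measure on the Grassmannian, apply Furstenberg's lemma to conclude concentration on $\{0,\mathfrak g\}$, and translate back. The paper justifies measurability of this map by noting it is upper semi-continuous, which answers the one obstacle you flag at the end; otherwise you have simply filled in the details of the same sketch.
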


Let $\irs_d(G)$ denote the set of discrete IRS. Since $G$ (with the assumption of \ref{prop:BD})
is isolated in the space $\sub(G)$ (this follows from \cite{Za}, see \cite{G}), it follows that $\irs_d(G)$ is compact.

%The assumptions of \ref{prop:BD} implies also that $G$ is isolated in the space $\sub(G)$ (this follows from \cite{To}, see \cite{G}), hence $\irs_d(G)$ is compact.
Let $U$ be an identity neighbourhood in $G$. A family of subgroups $\mathcal{F}\subset\sub(G)$ is called {\it $U$-uniformly discrete} if $\gC\cap U=\{1\}$ for all $\gC\in \mathcal{F}$.

\begin{defn}
A family $\mathcal{F}\subset\irs_d(G)$ of invariant random subgroups is said to be {\it weakly uniformly discrete} if for every $\gep>0$ there is an identity neighbourhood $U_\gep\subset G$ such that
$$
 \mu (\{\gC\in\sub_G:\gC\cap U_\gep\ne\{1\}\})<\gep
$$
for every $\mu\in \mathcal{F}$.
\end{defn}

\begin{thm}\label{thm:main}
Let $G$ be a connected non-compact simple Lie group. Then $\irs_d(G)$ is weakly uniformly discrete.
\end{thm}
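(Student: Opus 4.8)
The plan is to argue by contradiction through a compactness argument, exactly as the abstract advertises; the only serious input beyond the compactness of $\irs_d(G)$ is an elementary limiting argument showing that nontrivial elements escaping to the identity force a one-parameter subgroup into the Chabauty limit. Suppose the conclusion fails. Then there is $\gep_0>0$ and, for a decreasing basis $\{U_m\}$ of open identity neighbourhoods with $\overline{U_{m+1}}\subseteq U_m$ and $\bigcap_m U_m=\{1\}$, measures $\mu_m\in\irs_d(G)$ with $\mu_m(A_m)\ge\gep_0$, where $A_m:=\{\gC\in\sub(G):\gC\cap(U_m\setminus\{1\})\ne\emptyset\}$. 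Since $\irs_d(G)$ is compact, after passing to a subsequence I may assume $\mu_m\to\mu$ in the weak-$*$ topology with $\mu\in\irs_d(G)$. I then aim to contradict discreteness of $\mu$ by producing positive mass on non-discrete subgroups.

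First the topological bookkeeping. For open $O$ the set $\{\gC:\gC\cap O\ne\emptyset\}$ is a sub-basic Chabauty-open set, so each $A_m$ is open; moreover $A_m$ is decreasing in $m$, hence so is $\overline{A_m}$. Fix $\ell$. For every $m\ge\ell$ we have $A_m\subseteq A_\ell$, so $\mu_m(\overline{A_\ell})\ge\mu_m(A_m)\ge\gep_0$. Because $\overline{A_\ell}$ is closed, weak-$*$ convergence (the portmanteau inequality for closed sets) gives $\mu(\overline{A_\ell})\ge\limsup_m\mu_m(\overline{A_\ell})\ge\gep_0$. Letting $\ell\to\infty$ and using continuity from above of the finite measure $\mu$, I obtain $\mu\big(\bigcap_\ell\overline{A_\ell}\big)\ge\gep_0$. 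The point of passing to the closures $\overline{A_\ell}$ rather than working with the open $A_\ell$ directly is precisely to land on the correct side of the portmanteau inequality.

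It remains to prove the key inclusion $\bigcap_\ell\overline{A_\ell}\subseteq N$, where $N$ denotes the set of non-discrete closed subgroups; granting it, $\mu(N)\ge\gep_0>0$ contradicts $\mu\in\irs_d(G)$. This is the heart of the matter and the step I expect to be the real obstacle, since here the small elements — which vanish into the identity and are invisible to a naive Chabauty limit — must be converted into genuine non-discreteness. Suppose $\gC\in\bigcap_\ell\overline{A_\ell}$ is discrete, so $\gC\cap U_{\ell_0}=\{1\}$ for some $\ell_0$. Write $\gC$ as a Chabauty limit of subgroups $\gC_k$ each containing a nontrivial $\gc_k\in U_\ell$ with $\ell>\ell_0$, so $\overline{U_\ell}\subseteq U_{\ell_0}$. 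By the defining property of Chabauty convergence any accumulation point of $(\gc_k)_k$ lies in $\gC\cap\overline{U_\ell}\subseteq\gC\cap U_{\ell_0}=\{1\}$, hence $\gc_k\to1$. In an exponential chart write $\gc_k=\exp(X_k)$ with $X_k\to0$, $X_k\ne0$, and, after a further subsequence, $X_k/\|X_k\|\to v\ne0$ in $\fg$. For any $t\in\BR$ choose integers $j_k$ with $j_k\|X_k\|\to t$ (possible since $\|X_k\|\to0$); then $\gc_k^{\,j_k}=\exp(j_kX_k)\in\gC_k$ converges to $\exp(tv)$, whence $\exp(tv)\in\gC$. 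Thus $\gC$ contains the nontrivial one-parameter subgroup $\exp(\BR v)$, contradicting discreteness. This gives $\bigcap_\ell\overline{A_\ell}\subseteq N$ and completes the proof.

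I note that the final step itself needs neither the Zassenhaus lemma nor Proposition \ref{prop:BD}: all the Lie-theoretic content is absorbed into the compactness of $\irs_d(G)$ (which is where those inputs are used), and the elementary power-trick handles the conversion of escaping elements into a one-parameter subgroup uniformly over the elliptic, parabolic and hyperbolic cases. The genuine payoff of working with general IRS rather than lattices is exactly that it furnishes the compact space on which this limiting argument can be run.
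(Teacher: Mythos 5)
Your argument is correct. It shares the paper's essential mechanism --- the compactness of $\irs_d(G)$ --- but implements the key step differently. The paper covers the compact space $\irs_d(G)$ by the open sets $\mathcal{K}_{n,\gep}=\{\mu:\mu(K_n)>1-\gep\}$ and extracts a finite subcover; the crucial technical lemma is that $K_n=\sub(G)\setminus A_n$ is open, proved via the Zassenhaus/NSS fact that some identity neighbourhood $V$ contains no nontrivial subgroup, so that meeting $U_n$ nontrivially is equivalent to meeting the compact set $U_n\setminus V$, a closed condition in the Chabauty topology. You instead extract a weak-$*$ convergent subsequence, apply the portmanteau inequality to the closures $\overline{A_\ell}$, and then show by hand that $\bigcap_\ell\overline{A_\ell}$ consists of non-discrete groups via the power trick $\gc_k^{\,j_k}\to\exp(tv)$. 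In effect your one-parameter-subgroup lemma substitutes for the paper's openness lemma: had you invoked that lemma, you would get $\overline{A_\ell}=A_\ell$, and $\bigcap_\ell A_\ell$ is precisely the set of non-discrete subgroups, short-circuiting your last step. The trade-off is that the paper's formulation isolates the NSS property and therefore extends verbatim to groups with NDSS (as exploited in its later sections), whereas your exponential-chart argument is confined to Lie groups; in return, your route makes completely explicit how escaping small elements force a one-parameter subgroup into the limit. Both proofs consume the compactness of $\irs_d(G)$ as a black box, which is where Borel density and the isolation of $G$ in $\sub(G)$ enter, exactly as you note; the only detail worth making explicit in your write-up is that the $U_m$ should be chosen relatively compact so that the sequence $(\gc_k)$ has accumulation points in $\overline{U_\ell}$.
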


Picking $\gep<1$ and applying the theorem for the IRS $\mu_\gC$ where $\gC\le G$ is an arbitrary lattice, one deduces the Kazhdan--Margulis theorem \cite{KM}, and in particular that there is a positive lower bound on the volume of locally $G/K$-orbifolds:

\begin{cor}[Kazhdan--Margulis theorem]\label{cor:KM}
There is an identity neighbourhood $\gO\subset G$ such that for every lattice $\gC\le G$ there is $g\in G$ such that $g\gC g^{-1}\cap \gO=\{1\}$.
\end{cor}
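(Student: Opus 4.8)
The plan is to argue by contradiction, using the compactness of $\irs_d(G)$ to reduce the statement to the single fact that a discrete IRS cannot concentrate positive mass on subgroups with arbitrarily short nontrivial elements. Fix a (complete) left-invariant Riemannian metric $d$ on $G$ and, for $\gC\in\sub(G)$, define the systole $\ell(\gC)=\inf\{d(1,\gc):\gc\in\gC,\ \gc\ne 1\}$, so that $\gC$ is discrete exactly when $\ell(\gC)>0$ and non-discrete exactly when $\ell(\gC)=0$. Suppose the theorem fails. Then there are $\gep_0>0$, radii $r_n\downarrow 0$, and measures $\mu_n\in\irs_d(G)$ with $\mu_n(\{\ell<r_n\})\ge\gep_0$, taking $U_{r_n}$ to be the open $d$-ball of radius $r_n$ (note $\{\gC:\gC\cap U_{r_n}\ne\{1\}\}=\{\ell<r_n\}$). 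By compactness of $\irs_d(G)$ I pass to a subsequence with $\mu_n\to\mu$ weak-$*$, $\mu\in\irs_d(G)$; in particular $\mu(\{\ell=0\})=0$.

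The crux is that ``having a short element'' is an open condition, on which weak-$*$ convergence is only lower semicontinuous --- the wrong direction for a contradiction. The key device is therefore to replace it by a closed condition that short-element subgroups are forced to satisfy. Fix a small $R>0$, let $A_R=\{x\in G: R/2\le d(1,x)\le R\}$ (a compact annulus avoiding $1$), and put $D_R=\{\gC\in\sub(G):\gC\cap A_R\ne\emptyset\}$, which is \emph{closed} in the Chabauty topology since its complement $\{\gC:\gC\cap A_R=\emptyset\}$ is sub-basic open. I would then prove the claim: for $R$ below a fixed constant and $r_n\le R/2$, every $\gC$ with $0<\ell(\gC)<r_n$ lies in $D_R$. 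Indeed, given a short $\gc=\exp X\in\gC$, its powers $\gc^k=\exp(kX)$ run along the one-parameter subgroup through $\gc$, and by left-invariance $|d(1,\gc^{k+1})-d(1,\gc^k)|\le d(1,\gc)<r_n\le R/2$; hence the radii $d(1,\gc^k)$ cannot jump across the interval $[R/2,R]$ of width $R/2$, and it suffices to check $\sup_k d(1,\gc^k)\ge R/2$. This is where a little Lie theory enters: there is no nontrivial torsion near $1$ (if $\exp(mX)=1$ with $X$ small then $X=0$), so $\langle\gc\rangle$ is infinite, and for $R$ below a uniform constant $c_0>0$ every unit-direction one-parameter subgroup travels past radius $R/2$ --- by compactness of the unit sphere in $\Lie(G)$ and the local bi-Lipschitz behaviour of $\exp$. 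Thus some power of $\gc$ lands in $A_R$.

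With the claim in hand the bookkeeping is immediate and now points the right way. For each fixed small $R$ and all large $n$ one has $\mu_n(D_R)\ge\mu_n(\{\ell<r_n\})\ge\gep_0$, and since $D_R$ is closed the portmanteau theorem gives $\mu(D_R)\ge\limsup_n\mu_n(D_R)\ge\gep_0$; so $\mu(D_R)\ge\gep_0$ for every sufficiently small $R$. On the other hand $D_R\subseteq\{\ell\le R\}$, since any subgroup meeting $A_R$ has a nontrivial element of length $\le R$. As $\mu$ is discrete, $\mu(\{\ell\le\gd\})\downarrow\mu(\{\ell=0\})=0$ when $\gd\downarrow 0$, so I may pick $\gd$ with $\mu(\{\ell\le\gd\})<\gep_0$ and then take $R\le\gd$ (still small enough for the claim) to get $\gep_0\le\mu(D_R)\le\mu(\{\ell\le\gd\})<\gep_0$, a contradiction.

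I expect the only genuine obstacle to be the claim of the second paragraph --- that a single short element already forces an element of $\gC$ into the \emph{fixed} annulus $A_R$ --- since everything else is soft (compactness of $\irs_d(G)$, closedness of $D_R$, one application of portmanteau, and continuity of $\mu$ from above). The delicate sub-point is excluding the possibility that $\langle\gc\rangle$ remains trapped inside radius $R/2$; this is precisely where the non-compact simple Lie structure is used, via the absence of small torsion together with the uniform lower bound $c_0$ on how far unit one-parameter subgroups travel.
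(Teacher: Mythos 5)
Your argument is, up to repackaging, the one in the paper. The two pillars are identical: (i) compactness of $\irs_d(G)$ (which you, like the paper, take as established from Borel density plus the isolation of $G$ in $\sub(G)$), and (ii) an NSS-based device converting ``$\gC$ has a short nontrivial element'' into the closed condition ``$\gC$ meets a fixed compact set avoiding $1$''. The paper implements (ii) as the openness of $K_n=\{\gC:\gC\cap U_n=\{1\}\}$ via the compact set $U_n\setminus V$, where $V$ is an identity neighbourhood containing no nontrivial subgroup with $V^2\subset U_n$ --- exactly your annulus $A_R$ --- and implements the compactness step as an ascending open cover of $\irs_d(G)$ by the sets $\{\mu:\mu(K_n)>1-\gep\}$ rather than your sequential-compactness-plus-portmanteau formulation. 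The corollary is then the case $\mu=\mu_\gC$, $\gep<1$, which your setup absorbs by allowing $\gep_0=1$ and $\mu_n=\mu_{\gC_n}$.

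One assertion in your justification of the key claim is false: connected non-compact simple Lie groups \emph{do} contain nontrivial torsion arbitrarily close to the identity (order-$m$ rotations in a maximal compact subgroup, $m\to\infty$), so ``$\exp(mX)=1$ with $X$ small forces $X=0$'' fails already in $\PSL_2(\BR)$, and $\langle\gc\rangle$ need not be infinite. Fortunately this step is not what you need. What the increment argument requires is only that $\langle\gc\rangle\not\subset B(1,R/2)$, and that is precisely the no-small-subgroups property of Lie groups, which holds with no caveat about torsion: the finite cyclic group generated by a small rotation of order $m$ still contains a rotation by an angle close to $\pi$ and so escapes every fixed small ball. Replacing the torsion claim by NSS (the same hypothesis the paper invokes for Lemma 1.4), your claim that some power of $\gc$ lands in $A_R$ is correct, and the rest of the proof --- closedness of $D_R$, portmanteau, and continuity from above of $\mu$ at $\{\ell=0\}$ --- goes through.
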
 

Recall that every probability measure preserving $G$-space $(X,m)$ corresponds to an IRS, via the stabilizer map $X\ni x\mapsto G_x\in\sub(G)$ and that, vice versa, every IRS arises (non-uniquely) in this way (see \cite[Theorem 2.6]{7}). Thus,
Theorem \ref{thm:main} can be reformulated as follows:

\begin{thm}[p.m.p.\ actions are uniformly weakly locally free]\label{thm:p.m.p.}
For every $\gep>0$ there is an identity neighbourhood $U_\gep\subset G$ such that the stabilizers of $1-\gep$ of the points, in
any non-atomic probability measure preserving $G$-space $(X,m)$ are $U_\gep$-uniformly discrete. I.e.,
there is a subset $Y\subset X$ with $m(Y)>1-\gep$ such that $U_\gep\cap G_y=\{1\},~\forall y\in Y$.
% intersects trivially the stabilizer of every $y\in Y$.
%$u\cdot y\ne y$ for all $y\in Y$ and $u\in U$.
\end{thm}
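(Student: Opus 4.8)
The plan is to read Theorem~\ref{thm:p.m.p.} as the stabilizer translation of Theorem~\ref{thm:main} and to put all the work into the latter, as the abstract advertises. Given a p.m.p.\ $G$-space $(X,m)$, the stabilizer map $x\mapsto G_x$ pushes $m$ forward to an IRS $\mu_X$, which is discrete by Proposition~\ref{prop:BD} once the global fixed-point locus (carrying the atom $\gd_G$) is null. Applying Theorem~\ref{thm:main} to $\mu_X$ and setting $Y=\{x:G_x\cap U_\gep=\{1\}\}$, one gets $m(Y)=1-\mu_X(\{\gC:\gC\cap U_\gep\ne\{1\}\})>1-\gep$, which is exactly the assertion. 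So the substance is Theorem~\ref{thm:main}, which I would prove by a compactness/contradiction argument.

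Fix a left-invariant Riemannian metric on $G$ and write $B_r$ for the open $r$-ball about $1$. Suppose the conclusion fails: then there are $\gep_0>0$, radii $\gd_n\downarrow 0$, and measures $\mu_n\in\irs_d(G)$ with $\mu_n(\{\gC:\gC\cap B_{\gd_n}\ne\{1\}\})\ge\gep_0$. Since $\irs_d(G)$ is compact, I pass to a weak-$*$ convergent subsequence $\mu_n\to\mu\in\irs_d(G)$, and the goal becomes to show that $\mu$ charges the non-discrete subgroups, contradicting $\mu\in\irs_d(G)$.

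The main obstacle is that the bad set $\{\gC:\gC\cap B_{\gd}\ne\{1\}\}$ is \emph{open, not closed}: in a Chabauty limit a non-trivial small element may slide off to the identity, so one cannot transport the lower bound $\ge\gep_0$ across the limit. The crux is to replace ``has a small element'' by a genuinely closed condition. Because $G$ is a Lie group it has a Zassenhaus (no-small-subgroups) neighbourhood; fix $r$ small enough that $\ol{B_{2r}}$ lies inside it, so no non-trivial subgroup sits in $B_{2r}$. Then if $1\ne\gc\in\gC$ with $d(\gc,1)<\gd_n<r$, the powers $\gc^{j}$ cannot all stay in $B_r$ (else $\langle\gc\rangle$ would be a non-trivial subgroup of $B_{2r}$); since consecutive powers satisfy $d(\gc^{j+1},\gc^{j})=d(\gc,1)<r$ by left-invariance, the distance to $1$ grows in steps smaller than the width of the annulus, so some $\gc^{j}$ lands in the compact annulus $A_r=\{x:r\le d(x,1)\le 2r\}$. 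Hence, for all large $n$, $\{\gC:\gC\cap B_{\gd_n}\ne\{1\}\}\subseteq K_r:=\{\gC:\gC\cap A_r\ne\emptyset\}$, and $K_r$ is closed: an accumulating element of $\gC_n\cap A_r$ converges in the compact set $A_r$ and, by the Chabauty limit relation, lands in the limit subgroup.

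Now I finish with the portmanteau inequality and a $\limsup$ of sets. For each fixed (small) $r$ we have $\mu_n(K_r)\ge\gep_0$ for all large $n$, so closedness of $K_r$ gives $\mu(K_r)\ge\limsup_n\mu_n(K_r)\ge\gep_0$. Taking $r_m\downarrow 0$, the reverse Fatou inequality for the finite measure $\mu$ yields $\mu(\limsup_m K_{r_m})\ge\limsup_m\mu(K_{r_m})\ge\gep_0$; but any $\gC$ lying in infinitely many $K_{r_m}$ has non-trivial elements within $2r_m\to 0$ of $1$, hence is non-discrete. Thus $\mu$ assigns mass $\ge\gep_0$ to the non-discrete subgroups, contradicting $\mu\in\irs_d(G)$ and proving the theorem. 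The only delicate point is the no-small-subgroups/creeping step of the third paragraph, which trades the non-closed condition ``$\gC\cap B_{\gd}\ne\{1\}$'' for the closed condition ``$\gC\cap A_r\ne\emptyset$''; everything else is soft (compactness of $\irs_d(G)$, the portmanteau bound for closed sets, and continuity of measure).
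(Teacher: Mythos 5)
Your argument is correct and is essentially the paper's proof in contrapositive, sequential form: the reduction to Theorem \ref{thm:main} via the stabilizer IRS is exactly the paper's, your annulus/creeping step is the same Zassenhaus trick used in Lemma \ref{K_n-open} (where $\gC\cap U_n\ne\{1\}$ is shown equivalent to the closed condition $\gC\cap(U_n\setminus V)\ne\emptyset$ for an NSS neighbourhood $V$ with $V^2\subset U_n$), and your weak-$*$ convergent subsequence plus the portmanteau bound for closed sets is the sequential version of the paper's extraction of a single $\mathcal{K}_{m,\gep}$ from an open cover of the compact space $\irs_d(G)$. The concluding reverse-Fatou step plays the role of the paper's Claim \ref{clm} that $\bigcup_n K_n$ exhausts $\sub_d(G)$, so the two proofs differ only in packaging.
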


%A family $\mathcal{F}\subset\sub(G)$ is called {\it uniformly discrete} if it is $U$-uniformly discrete for some identity neighbourhood $U\subset G$. 
 A famous conjecture of Margulis \cite[page 322]{Ma} asserts that the set of all torsion-free anisotropic arithmetic lattices in $G$ is $U$-uniformly discrete for some identity neighbourhood $U\subset G$. 
Theorem \ref{thm:main} can be regarded as a probabilistic variant of this conjecture as it implies that all lattices in $G$ are jointly weakly uniformly discrete.
% Indeed, it implies that for every $\gep$, there is $U$ such that for every lattice $\gC\le G$, $(1-\gep)\vol(G/\gC)$ of its conjugates are $U$-uniformly discrete. 

%In fact, Theorem \ref{thm:p.m.p.} can be restated as follows:
%
%%\medskip
%\noindent
%{\it For every $\gep>0$ there is an identity neighbourhood $U$ such that the stabilizers of $(1-\gep)$ of the points of any
%non-atomic p.m.p. action $G\act X$ are $U$-uniformly discrete.}
%

\section{Proofs}\label{sec:proofs}

For the sake of completeness, let us start by:

\begin{proof}[Sketching a proof for Proposition \ref{prop:BD}]
By the Von-Neumann--Cartan theorem, every closed subgroup of $G$ is a Lie group. Consider the map $\psi:\sub_G\to\Gr(\Lie(G))$, $H\mapsto \Lie(H)$. It is upper semi-continuous, hence measurable. Let $\nu=\psi_*(\mu)$. Then $\nu$ is $\Ad(G)$-invariant and hence, by the Furstenberg lemma \cite{Fu}, is supported on $\{\Lie(G),\{0\}\}$. Finally observe that $\psi^{-1}(\Lie(G))=G$ and $\psi^{-1}(\{0\})$ is the set of discrete subgroups of $G$.
\end{proof}

%Fix a proper left invariant metric $d$ on $G$, and set 
%$$
% K_n=\{\gC\in\sub_G:\gC\cap B(\id G ,1/n)=\{\id G \}.
%$$

%We denote by $\sub_d(G)\subset\sub(G)$ the set of all discrete subgroups of $G$. Note that $\sub_d(G)$ is measurable, being the pre-image of $\{0\}$ under the map $\psi$ above.

Let $U_n, n\in\BN$ be a descending sequence of compact sets in $G$ which form a base of identity neighbourhoods, and set
$$
  K_n=\{\gC\in\sub_G:\gC\cap U_n=\{\id G \}\}.
$$

Since $G$ has NSS (no small subgroups), i.e. there is an identity neighbourhood which contains no non-trivial subgroups, we have:

\begin{lem}\label{K_n-open}
The sets $K_n$ are open in $\sub(G)$.
\end{lem}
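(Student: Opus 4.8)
The plan is to prove the equivalent statement that the complement $K_n^c=\{\gC\in\sub(G):\gC\cap U_n\ne\{1\}\}$ is closed. Since $G$ is a Lie group, $\sub(G)$ is compact and metrizable, so it suffices to check sequential closedness. I would therefore start with a sequence $\gC_i\to\gC$ in $\sub(G)$ with each $\gC_i\in K_n^c$ and aim to exhibit a nontrivial element of $\gC\cap U_n$. For each $i$ choose $1\ne\gamma_i\in\gC_i\cap U_n$. Because $U_n$ is compact, after passing to a subsequence $\gamma_i\to g$ for some $g\in U_n$, and by the upper-semicontinuity part of the defining property of the Chabauty topology (limits of elements drawn from $\gC_i$ lie in $\gC$), we get $g\in\gC\cap U_n$. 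If $g\ne1$ we are finished, so the only genuine difficulty is the degenerate case $g=1$, in which the nontrivial elements $\gamma_i$ collapse to the identity.

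This degenerate case is exactly where NSS is needed, through an escape argument. I would fix an open, symmetric, relatively compact identity neighbourhood $V$ small enough that (i) $\ol V$ contains no nontrivial subgroup, which is possible by NSS, and (ii) $\ol V\cdot\ol V\subset U_n$, which is possible by continuity of multiplication since $U_n$ is an identity neighbourhood. Assuming $g=1$, we have $\gamma_i\in V$ for all large $i$. As $\langle\gamma_i\rangle$ is a nontrivial subgroup it cannot be contained in $\ol V$, so there is a least integer $k_i\ge2$ with $\gamma_i^{k_i}\notin V$. By minimality $\gamma_i^{k_i-1}\in V$, and since $\gamma_i\in V$ as well, we obtain $\gamma_i^{k_i}=\gamma_i^{k_i-1}\gamma_i\in\ol V\cdot\ol V\subset U_n$, while $\gamma_i^{k_i}\notin V$. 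Hence $\gamma_i^{k_i}$ lies in the compact set $(\ol V\cdot\ol V)\setminus V$.

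Passing to a further subsequence, $\gamma_i^{k_i}\to h$ with $h\in(\ol V\cdot\ol V)\setminus V\subset U_n$; in particular $h\ne1$ because $1\in V$. Since $\gamma_i^{k_i}\in\gC_i$, the same Chabauty property yields $h\in\gC$, so $h$ is a nontrivial element of $\gC\cap U_n$ and $\gC\in K_n^c$. Combining this with the easy case $g\ne1$ shows that $K_n^c$ is closed, hence $K_n$ is open.

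The main obstacle is precisely the escape step: turning a sequence of nontrivial subgroup elements that degenerate to the identity into a single nontrivial limit element that remains inside $U_n$. The delicate point is the choice of $V$, arranged so that the first power $\gamma_i^{k_i}$ that escapes $V$ is simultaneously trapped in $\ol V\cdot\ol V\subset U_n$ and bounded away from the identity, which is the only place the no-small-subgroups hypothesis is used.
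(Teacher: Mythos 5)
Your proof is correct and takes essentially the same route as the paper: the paper likewise uses NSS to pick a neighbourhood $V$ containing no nontrivial subgroups with $V^2\subset U_n$, notes (via exactly your escape argument, left implicit) that a subgroup meets $U_n$ nontrivially iff it meets the compact set $U_n\setminus V$, and concludes from the Chabauty-closedness of $\{\gC:\gC\cap C\ne\emptyset\}$ for compact $C$. You have simply unwound that last standard fact, and the escape step, into an explicit sequential argument.
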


\begin{proof}
Fix $n$ and
let $V\subset U_n$ be an open identity neighbourhood which contains no non-trivial subgroups, such that ${V^2}\subset U_n$. It follows that a subgroup $\gC$ intersects $U_n$ non-trivially iff it intersects $U_n\setminus V$. Since $U_n\setminus V$ is compact, the lemma is proved.  
\end{proof}

In addition, observe that the ascending union $\bigcup_n K_n$ exhausts $\sub_d(G)$, the set of all discrete subgroups of $G$. Therefore we have:

\begin{clm}\label{clm}
For every $\mu\in\irs_d(G)$ and $\gep>0$ we have $\mu(K_n)>1-\gep$ for some $n$.\qed
\end{clm}

Let
$$
 \mathcal{K}_{n,\gep}:= \{\mu\in\irs_d(G):\mu(K_n)>1-\gep\}.
$$
Since $\sub(G)$ is metrizable, it follows from Lemma \ref{K_n-open} that $\mathcal{K}_{n,\gep}$ is open. 
By Claim \ref{clm}, for any given $\gep>0$, the sets $\mathcal{K}_{n,\gep},~n\in\BN$ form an ascending cover of $\irs_d(G)$. Since the latter is compact, we have $\irs_d(G)\subset \mathcal{K}_{m,\gep}$ for some $m=m(\gep)$. 
It follows that 
$$
 \mu\big(\{ \gC\in\sub(G):\gC~\text{intersects}~U_m~\text{trivially}\}\big)>1-\gep,
$$
for every $\mu\in\irs_d(G)$. This completes the proof of Theorem \ref{thm:main}.
\qed

\section{The semisimple case}

Theorem \ref{thm:main} holds more generally for semisimple groups:

\begin{thm}
Let $G$ be a connected center-free semisimple Lie group with no compact factors. Then $\irs_d(G)$ is weakly uniformly discrete. 
\end{thm}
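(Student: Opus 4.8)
The plan is to reduce matters to the single extra fact that is really at stake, namely the compactness of $\irs_d(G)$. Indeed, the proof of Theorem \ref{thm:main} given above is purely formal once one knows two things: that $G$ has no small subgroups, which yields Lemma \ref{K_n-open} (the sets $K_n$ are open) verbatim for \emph{any} Lie group, and that $\irs_d(G)$ is compact. The NSS property holds for every Lie group, so the sets $\mathcal{K}_{n,\gep}$ are again open and, by Claim \ref{clm}, cover $\irs_d(G)$; the finite-subcover argument then finishes the proof exactly as before. Thus everything rests on re-establishing compactness of $\irs_d(G)$ in the semisimple setting.

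First I would record the semisimple analogue of Proposition \ref{prop:BD}. Writing $G=G_1\times\cdots\times G_k$ as a product of its simple factors (possible since $G$ is center-free with no compact factors), the map $\psi\colon\sub(G)\to\Gr(\Lie(G))$, $H\mapsto\Lie(H)$, pushes any $\mu\in\irs(G)$ to an $\Ad(G)$-invariant measure $\nu=\psi_*\mu$. The Furstenberg lemma again forces $\nu$ to be supported on the $\Ad(G)$-fixed points of $\Gr(\Lie(G))$; since $\Lie(G)=\bigoplus_i\Lie(G_i)$ with the summands pairwise non-isomorphic as $G$-modules, these fixed points are exactly the finitely many ideals $\Lie(G_S)=\bigoplus_{i\in S}\Lie(G_i)$, $S\subseteq\{1,\dots,k\}$. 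Hence every $\mu$ decomposes according to the type $S$ of $\Lie(\gC)$, and $\mu$ is discrete precisely when all its mass sits on the type $S=\emptyset$. Moreover, for a nonempty type one has $\Lie(\gC)\supseteq\Lie(G_i)$ for some $i$, hence $\gC\supseteq\gC^0\supseteq G_i$.

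Compactness of $\irs_d(G)$ is equivalent to its being closed in the compact space $\irs(G)$, so suppose $\mu_n\to\mu$ with every $\mu_n$ discrete; I must rule out that $\mu$ charges a non-trivial type. By the previous paragraph it suffices to show $\mu(F_i)=0$ for each factor, where $F_i=\{\gC\in\sub(G):\gC\supseteq G_i\}$ is closed. This is the main obstacle, and it is exactly the point where the simple case must be fed back in. The delicacy is to distinguish the harmless discrete IRS whose subgroups merely \emph{project densely} into $G_i$ (irreducible lattices, say) from a hypothetical limit whose subgroups actually \emph{contain} $G_i$. Neither projection onto $G_i$ nor intersection with $G_i$ is Chabauty-continuous, so one cannot simply push $\mu_n$ forward to $\irs_d(G_i)$ and invoke isolation of $G_i$ in $\sub(G_i)$.

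To overcome this I would argue with a Zassenhaus neighbourhood $\Omega\subset G$, i.e.\ an identity neighbourhood such that $\langle\gC\cap\Omega\rangle$ is nilpotent of bounded class for every discrete $\gC$. Fix a compact $K=G_i\cap\Omega'$ with nonempty interior in $G_i$ and $\Omega'\subset\Omega$, and for $\gd>0$ set $F_i^{(\gd)}=\{\gC: \gC\cap\Omega\ \text{is}\ \gd\text{-dense in}\ K\}$. Each $F_i^{(\gd)}$ is open and contains $F_i$ (whose members contain $K$), so $\mu(F_i^{(\gd)})\ge\mu(F_i)$; if $\mu(F_i)>0$ then, $F_i^{(\gd)}$ being open, the portmanteau inequality $\liminf_n\mu_n(F_i^{(\gd)})\ge\mu(F_i^{(\gd)})>0$ produces, for every $\gd$, a \emph{discrete} subgroup $\gC\in F_i^{(\gd)}$. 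For such $\gC$ the image in $G_i$ of $\gC\cap\Omega$ is $\gd$-dense in $K$, while Zassenhaus forces $\langle\gC\cap\Omega\rangle$, hence its image in $G_i$, to be nilpotent of bounded class; the closure of that image is again nilpotent, yet it is $\gd$-dense in $K$, so for small $\gd$ it has nonempty interior, hence is open and equals the connected group $G_i$. As $G_i$ is not solvable this is a contradiction, so $\mu(F_i)=0$. This yields compactness of $\irs_d(G)$ and completes the proof along the lines of Theorem \ref{thm:main}. The step I expect to be most delicate is the passage from the measure-theoretic hypothesis $\mu(F_i)>0$ to a geometric, single-subgroup conclusion, which is precisely what the openness of the sets $F_i^{(\gd)}$ together with the Zassenhaus neighbourhood is designed to bridge.
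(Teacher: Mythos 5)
Your overall strategy is sound and genuinely different in its mechanics from the paper's, but one step fails as written. The paper obtains compactness of $\irs_d(G)$ in two lines: letting $\mathcal{D}=\overline{\sub_d(G)}$, the Zassenhaus lemma forces the Lie algebra of every $H\in\mathcal{D}$ to be nilpotent, while the Borel density theorem for IRS forces the Zariski closure of almost every subgroup to be connected and semisimple; together these show that every $G$-invariant probability measure on the compact set $\mathcal{D}$ is a discrete IRS, so $\irs_d(G)=\text{Prob}(\mathcal{D})^G$, which is compact by Alaoglu. You instead prove closedness of $\irs_d(G)$ in $\irs(G)$ directly. Your reduction to showing $\mu(F_i)=0$ is correct (Furstenberg applied to $\psi_*\mu$ on $\Gr(\Lie(G))$, whose $\Ad(G)$-fixed points are exactly the ideals $\Lie(G_S)$), and so is the portmanteau step producing, for every $\gd>0$, a discrete $\gC$ whose intersection with the Zassenhaus neighbourhood is $\gd$-dense in $K$.

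The gap is the sentence ``it is $\gd$-dense in $K$, so for small $\gd$ it has nonempty interior.'' A closed proper subgroup of positive codimension always has empty interior, and $\gd$-density in a compact identity neighbourhood does not preclude this for a fixed $\gd$ (think of $\gd\BZ^n$ in $\BR^n$: $\gd$-dense, empty interior; nothing in your argument rules out, a priori, an analogous lower-dimensional configuration inside $G_i$ for the particular $\gd$ you are handed). What saves you is that you obtain such a subgroup $N_\gd$ for \emph{every} $\gd>0$, each nilpotent of class at most $c=c(\Omega)$: pass to a Chabauty limit $N$ of the $N_\gd$ as $\gd\to 0$; then $N\supseteq K$, hence $N\supseteq\langle K\rangle=G_i$, while the law $[\dots[x_1,x_2],\dots,x_{c+1}]=1$ survives Chabauty limits, so $N$ is nilpotent of class at most $c$ --- contradicting the simplicity (indeed non-solvability) of $G_i$. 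Equivalently, you may invoke the isolation of $G_i$ in $\sub(G_i)$, which the paper already uses and which is itself a consequence of Zassenhaus. With that one step repaired your proof is complete; it is longer than the paper's, but it has the virtue of exhibiting concretely where a putative non-discrete limit IRS would have to concentrate.
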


The additional ingredient, needed in order to apply the proof above is:

\begin{prop}
$\irs_d(G)$ is compact.
\end{prop}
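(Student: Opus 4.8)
The plan is to deduce compactness from closedness: since $\irs(G)$ is compact, it suffices to prove that $\irs_d(G)$ is closed in it. Write $G=G_1\times\cdots\times G_k$ as the internal direct product of its non-compact simple factors, which is legitimate because $G$ is center-free. First I would record the semisimple analogue of Proposition \ref{prop:BD}: applying the map $\psi\colon\sub(G)\to\Gr(\Lie(G))$, $H\mapsto\Lie(H)$ together with the Furstenberg lemma \cite{Fu} exactly as before, for every $\mu\in\irs(G)$ the pushforward $\psi_*\mu$ is $\Ad(G)$-invariant and hence supported on the $\Ad(G)$-fixed subspaces, i.e.\ on the ideals $\Lie(G_I)=\bigoplus_{i\in I}\Lie(G_i)$, $I\subseteq\{1,\dots,k\}$. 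Thus for $\mu$-a.e.\ $\gC$ the identity component $\gC^0$ equals some $G_I$; in particular $\gC$ is either discrete or contains a factor $G_i$. Setting
$$
 N:=\bigcup_{i=1}^{k}\{\gC\in\sub(G):\gC\supseteq G_i\},
$$
we obtain $\mu(\sub(G)\setminus\sub_d(G))\le\mu(N)$ for every $\mu\in\irs(G)$, so it is enough to show that $\mu(N)=0$ whenever $\mu$ lies in the weak-$*$ closure of $\irs_d(G)$.

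The geometric heart of the matter is the following isolation statement, playing the role that ``$G$ is isolated in $\sub(G)$'' played in the simple case: no sequence of discrete subgroups of $G$ can Chabauty-converge to a subgroup containing a positive-dimensional simple factor. I would prove it through the Zassenhaus--Kazhdan--Margulis phenomenon \cite{Za,KM}. Fix an open Zassenhaus neighbourhood $\gO\subset G$, so that for every discrete $\gC$ the group generated by $\gC\cap\gO$ is nilpotent of class at most $c=\dim G$. Choose finitely many elements of $G_i\cap\gO$ that topologically generate $G_i$. If discrete subgroups $\gC_n$ converged to some $\gC\supseteq G_i$, each chosen element would be a limit of elements $\gc_n^{(j)}\in\gC_n$ lying in $\gO$ for $n$ large, so that all their iterated commutators of length $c+1$ vanish; passing to the limit (the commutator maps being continuous) the chosen generators would satisfy the same relations, forcing $G_i$, the closure of the group they generate, to be nilpotent --- a contradiction. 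Hence the closed set $N$ is disjoint from the closed set $\overline{\sub_d(G)}$.

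It remains to combine the two ingredients. Since $\sub(G)$ is compact and metrizable, the disjoint closed sets $N$ and $\overline{\sub_d(G)}$ can be separated by an open set $W$ with $N\subseteq W$ and $W\cap\sub_d(G)=\emptyset$, whence $W\subseteq\sub(G)\setminus\sub_d(G)$. Now let $\mu$ be any weak-$*$ limit of a sequence $\mu_n\in\irs_d(G)$. Each $\mu_n$ is discrete, so $\mu_n(W)\le\mu_n(\sub(G)\setminus\sub_d(G))=0$; as $W$ is open, $\mu\mapsto\mu(W)$ is lower semi-continuous, and therefore $\mu(N)\le\mu(W)\le\liminf_n\mu_n(W)=0$. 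By the first paragraph this gives $\mu(\sub(G)\setminus\sub_d(G))=0$, i.e.\ $\mu\in\irs_d(G)$, so $\irs_d(G)$ is closed and hence compact. I expect the Zassenhaus argument of the middle paragraph --- the semisimple replacement for the isolation of $G$ --- to be the main obstacle, the remaining steps being the same soft weak-$*$ and semicontinuity considerations already used in the simple case.
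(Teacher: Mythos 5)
Your proof is correct, and it rests on the same two pillars as the paper's argument: the Zassenhaus--Kazhdan--Margulis theorem, which prevents Chabauty limits of discrete subgroups from being large, and a Furstenberg/Borel-density step, which says an IRS charges only discrete subgroups and ``large'' ones; both proofs ultimately show $\irs_d(G)$ is closed in the compact space $\irs(G)$. The packaging, however, is genuinely different. The paper quotes from \cite{Za} (cf.\ \cite[Theorem 4.1.7]{Thurston}) that every $H\in\overline{\sub_d(G)}$ has \emph{nilpotent} Lie algebra, pairs this with the Zariski-closure form of Borel density from \cite{GL} (a.e.\ subgroup has connected semisimple Zariski closure, and a member of $\overline{\sub_d(G)}$ with semisimple Zariski closure is discrete, a nilpotent ideal of a semisimple algebra being trivial), and then simply identifies $\irs_d(G)$ with $\text{Prob}(\overline{\sub_d(G)})^G$, compact by Alaoglu. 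You instead decompose $G$ into its simple factors, use the Grassmannian form of Furstenberg's lemma to see that a.s.\ a non-discrete subgroup contains some $G_i$, prove the isolation of $\{\gC\supseteq G_i\}$ from $\overline{\sub_d(G)}$ by hand, and finish by portmanteau --- which is equivalent to the paper's identification. Your route is more self-contained, but one step needs justification: the existence of finitely many elements of $G_i\cap\gO$ topologically generating $G_i$ is Kuranishi's theorem on dense $2$-generation of connected semisimple Lie groups by elements arbitrarily close to the identity, and should be cited. It can also be sidestepped: since $\gC\supseteq G_i$, \emph{every} $(c+1)$-tuple of elements of $G_i\cap\gO$ is a limit of tuples in $\gC_n\cap\gO$, so all left-normed commutators of weight $c+1$ of elements of $G_i\cap\gO$ vanish; as $G_i\cap\gO$ already generates the connected group $G_i$ algebraically, $G_i$ would be nilpotent of class at most $c$, a contradiction requiring no topological finite generation.
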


\begin{proof}
Let ${\mathcal{D}}=\overline{\sub_d(G)}\subset\sub(G)$ denote the closure of the set of discrete subgroups of $G$. As follows from \cite{Za} (see also \cite[Theorem 4.1.7]{Thurston}), the Lie algebra of any $H\in {\mathcal{D}}$ is nilpotent. In particular, every $H\in {\mathcal{D}}$ with semisimple Zariski closure is discrete. On the other hand, by the IRS version of the Borel density theorem (see \cite{GL} for the case where $G$ is not simple) for every IRS $\mu$, the Zariski closure of $\mu$-almost every subgroup is connected and semisimple. It follows that $\irs_d(G)$ coincides with $\text{Prob}({\mathcal{D}})^G$, the space of $G$-invariant probability measures on the compact space ${\mathcal{D}}$. Hence, by Alaoglu's theorem it is compact.
\end{proof}

\section{General groups}
Consider now a general locally compact $\gs$-compact group $G$.
Since $\sub_d(G)\subset\sub(G)$ is a measurable subset, by restricting attention to it, one may replace Property NSS
%being the inverse image of the map $\psi$, introduced in the proof of Proposition \ref{prop:BD}, 
by the weaker Property NDSS (no discrete small subgroups), which means that there is an identity neighbourhood which contains no non-trivial discrete subgroups. In that generality, the analog of Lemma \ref{K_n-open} would say that $K_n$ are relatively open in $\sub_d(G)$. Thus, the ingredients required for the argument above are:

%The proof of Theorem \ref{thm:main} relies on the following ingredients:

\begin{enumerate}
\item $\irs_d(G)$ is compact,
\item $G$ has NDSS.
\end{enumerate}

In particular we have:

\begin{thm}\label{thm:general}
Let $G$ be a locally compact $\gs$-compact group which satisfies $(1)$ and $(2)$. Then $\irs_d(G)$ is weakly uniformly discrete.
\end{thm}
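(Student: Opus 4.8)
The plan is to rerun, almost verbatim, the compactness argument that proved Theorem \ref{thm:main}, replacing the two places where the simple-Lie-group hypothesis was used by the abstract assumptions $(1)$ and $(2)$. Fix a descending base $U_n$, $n\in\BN$, of compact identity neighbourhoods and set $K_n=\{\gC\in\sub(G):\gC\cap U_n=\{1\}\}$ as before. Since $\sub_d(G)$ is now only a measurable (rather than open or closed) subset of $\sub(G)$, I cannot expect $K_n$ itself to be open; instead I would prove the weaker statement that $K_n\cap\sub_d(G)$ is relatively open in $\sub_d(G)$, and, more usefully, that it agrees on $\sub_d(G)$ with a genuinely open subset of $\sub(G)$.

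This is exactly where Property NDSS enters, in place of NSS. For fixed $n$ I would choose an open identity neighbourhood $V\subset U_n$ containing no non-trivial discrete subgroup with $V^2\subset U_n$, and set $W_n=\{\gC\in\sub(G):\gC\cap(U_n\setminus V)=\emptyset\}$; this is open in the Chabauty topology because $U_n\setminus V$ is compact. I claim $K_n\cap\sub_d(G)=W_n\cap\sub_d(G)$. The inclusion $\subseteq$ is clear. Conversely, if a discrete $\gC$ meets $U_n$ in some $g\ne 1$, then the cyclic group $\langle g\rangle$ is a non-trivial discrete subgroup, so by NDSS it is not contained in $V$; taking the least $k$ with $g^k\notin V$ gives $g^{k-1}\in V$, whence $g^k=g^{k-1}g\in V^2\subset U_n$ and $g^k\in\gC\cap(U_n\setminus V)$, so $\gC\notin W_n$. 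This is the one genuinely new computation, and it is the step I would expect to be most delicate, precisely because it must be made to work for the merely measurable family $\sub_d(G)$ rather than on all of $\sub(G)$.

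With $W_n$ in hand the rest is soft. Because every $\mu\in\irs_d(G)$ is concentrated on $\sub_d(G)$, we have $\mu(K_n)=\mu(W_n)$, and since $W_n$ is open the map $\mu\mapsto\mu(W_n)$ is lower semi-continuous for the weak-$*$ topology; hence
$$
  \mathcal{K}_{n,\gep}=\{\mu\in\irs_d(G):\mu(K_n)>1-\gep\}=\{\mu\in\irs_d(G):\mu(W_n)>1-\gep\}
$$
is open in $\irs_d(G)$. As the $K_n$ ascend to $\sub_d(G)$ (a discrete subgroup misses some basic neighbourhood $U_n$), Claim \ref{clm} holds verbatim, so for each fixed $\gep$ the sets $\mathcal{K}_{n,\gep}$ form an ascending open cover of $\irs_d(G)$. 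Finally I would invoke assumption $(1)$: $\irs_d(G)$ is compact, so it lies in a single $\mathcal{K}_{m,\gep}$, and then $U_m$ witnesses weak uniform discreteness for the whole family. Thus both hypotheses enter exactly once --- NDSS to manufacture the open sets $W_n$, and compactness to extract a uniform $m$ from the cover.
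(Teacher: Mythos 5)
Your proposal is correct and follows essentially the same route as the paper, which only sketches this proof by remarking that the argument of Section \ref{sec:proofs} goes through once NSS is weakened to NDSS (making $K_n$ relatively open in $\sub_d(G)$) and compactness of $\irs_d(G)$ is assumed; your explicit open set $W_n$ with $K_n\cap\sub_d(G)=W_n\cap\sub_d(G)$ is exactly the intended relative-openness statement, written out. The only micro-detail worth adding is to take $V$ symmetric (replace it by $V\cap V^{-1}$) so that the least positive $k$ with $g^k\notin V$ is guaranteed to exist.
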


As explained in Section \ref{sec:proofs}, if $G$ possesses the Borel density theorem (i.e. the analog of Proposition \ref{prop:BD}) and $G$ is isolated in $\sub(G)$ then $(1)$ holds. By \cite{GL}, simply-connected simple groups over non-Archimedean local fields enjoy these two properties, hence satisfy $(1)$. 

For a locally compact totally disconnected group $G$, the following are equivalent:
\begin{itemize}
\item $G$ has property NDSS.
\item $G$ admits an open torsion-free subgroup.
\item The space $\sub_d(G)$ is uniformly discrete.
\end{itemize}

\paragraph{\bf $p$-adic groups.}
Note that a $p$-adic analytic group $G$ has NDSS,
%, hence, in view of the Borel density theorem (proved in \cite{GL}), if $G$ is an almost simple $p$-adic analytic group then 
and hence $\irs_d(G)$ is uniformly discrete (in the obvious sense).
% satisfy a strong version of Theorem \ref{thm:general}, namely:
%
%\begin{thm}[See \cite{GL} for more details]
%Let $G$ be an almost simple $p$-adic analytic group. Then $\irs_d(G)$ is uniformly discrete.
%\end{thm}
Moreover, if $G\le\GL_n(\BQ_p)$ is a rational algebraic subgroup, then the first principal congruence subgroup $G(p\BZ_p)$ is a torsion-free open compact subgroup. Supposing further that $G$ is simple, then in view of the Borel density theorem proved in \cite{GL}, every IRS of $G$ is supported on $\{ G\}\cup\sub_d(G)$. Therefore we have:

\begin{prop}[See \cite{GL} for more details]
Let $G\le \GL_n(\BQ_p)$ be a simple $\BQ$-algebraic group. Let $(X,\mu)$ be a probability $G$-space essentially with no global fixed points.
Then the action of the congruence subgroup $G(p\BZ_p)$ on $X$ is essentially free.
\end{prop}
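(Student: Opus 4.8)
The plan is to translate the statement about the $G(p\BZ_p)$-action into a statement about the invariant random subgroup attached to the $G$-action, and then combine two facts: that $\mu$-almost every point-stabilizer is \emph{discrete}, and that $G(p\BZ_p)$ is a \emph{torsion-free} compact subgroup. First I would form the pushforward $\nu:=\Phi_*\mu$ of $\mu$ under the stabilizer map $\Phi:X\to\sub(G),\ x\mapsto G_x$, which is an IRS of $G$. Since a point $x\in X$ is a global fixed point exactly when $G_x=G$, the hypothesis that $(X,\mu)$ has essentially no global fixed points is precisely $\nu(\{G\})=\mu(\{x:G_x=G\})=0$.

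Next I would invoke the IRS Borel density theorem for the simple $\BQ$-algebraic group $G$ proved in \cite{GL}, which asserts that every IRS of $G$ is supported on $\{G\}\cup\sub_d(G)$. Together with $\nu(\{G\})=0$ this forces $\nu$ to be supported on $\sub_d(G)$; equivalently, $G_x$ is a discrete subgroup of $G$ for $\mu$-almost every $x\in X$.

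The heart of the matter is then a pointwise group-theoretic observation. For $\mu$-almost every $x$, the stabilizer of $x$ under the restricted action of $G(p\BZ_p)$ is the intersection $G(p\BZ_p)\cap G_x$. This sits inside the compact group $G(p\BZ_p)$ and inside the discrete group $G_x$, hence it is simultaneously compact and discrete, and therefore finite. Because $G(p\BZ_p)$ is the first principal congruence subgroup and is thus torsion-free, its only finite subgroup is trivial, so $G(p\BZ_p)\cap G_x=\{1\}$ for $\mu$-almost every $x$. This is exactly the assertion that $G(p\BZ_p)$ acts essentially freely on $X$.

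Once these inputs are assembled the deduction is immediate, so essentially all the genuine content is concentrated in the Borel density step: one must know that the simplicity and $\BQ$-algebraicity of $G$ really do exclude positive mass on proper non-discrete closed subgroups, and it is here that the work of \cite{GL} is needed. The only other point deserving care is the torsion-freeness of $G(p\BZ_p)$, which is the standard fact that the first congruence kernel of $\GL_n(\BZ_p)$ is torsion-free for odd $p$ (for $p=2$ one takes $1+4\,M_n(\BZ_2)$ instead); since the paper takes this as given, no further argument is required.
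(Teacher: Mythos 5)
Your argument is correct and is essentially the same as the paper's, which does not write out a formal proof but derives the proposition in the preceding paragraph from exactly these two inputs: the Borel density theorem of \cite{GL} forcing almost every stabilizer to be discrete once global fixed points are excluded, and the fact that $G(p\BZ_p)$ is a torsion-free open compact subgroup, so its intersection with a discrete subgroup is finite and hence trivial. Your added caveat about $p=2$ is a reasonable refinement that the paper glosses over.
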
 

\paragraph{\bf Positive characteristic.}
Algebraic groups over local fields of positive characteristic do not posses property NDSS, and our argument does not apply to them.

\begin{ques}
Let $k$ be a local field of positive characteristic, let $\BG$ be simply connected absolutely almost simple $k$-group with positive $k$-rank 
and let $G=\BG(k)$ be the group of $k$-rational points. Is $\irs_d(G)$ weakly uniformly discrete? 
\end{ques}

\noindent
The analog of Corollary \ref{cor:KM} in positive characteristic was proved in \cite{Ali,Rag}.

\end{document}